\documentclass[a4paper,11pt]{amsart}
\usepackage{mathrsfs}
\usepackage[leqno]{amsmath}
\usepackage{amssymb}
\usepackage[latin1]{inputenc}
\usepackage[T1]{fontenc}


\bibliographystyle{amsplain}
\textheight 8.7in
\textwidth 5.85in
\voffset -0.4in
\hoffset -0.4in
\theoremstyle{plain}

\newtheorem{Prop}[subsection]{Proposition}
\newtheorem{Lem}[subsection]{Lemma}
\newtheorem{Cor}[subsection]{Corollary}

\newtheorem{Rem}[subsection]{Remark}

\newcommand{\comments}[1]{}
\newcommand{\ssm}{\smallsetminus}

\newcommand{\osm}{(\Omega, \Sigma, \mu)}





\newcommand{\C}{\mathbb{C}}

\newcommand{\N}{\mathbb{N}}
\newcommand{\R}{\mathbb{R}}

\newcommand{\ga}{\gamma}
\newcommand{\Ga}{\Gamma}

\newcommand{\la}{\lambda}

\newcommand{\Si}{\Sigma}
\newcommand{\si}{\sigma}
\newcommand{\Om}{\Omega}



\newcommand{\lra}{\longrightarrow}


\newcommand{\cL}{{\mathcal L}}
\newcommand{\cM}{{\mathcal M}}


\newcommand{\D}{\mathbb{D}}

\newcommand{\ces}{{\operatorname{ces}\nolimits}}
\newcommand{\cesp}{{\operatorname{ces(\emph{p})}\nolimits}}

\numberwithin{equation}{section}

\begin{document}
\title[Order spectrum of the Cesàro operator in Banach lattice sequence spaces]{Order spectrum of the Cesàro operator in Banach lattice sequence spaces}
\author[J. Bonet and W.J. Ricker]{J. Bonet and W.J. Ricker}
\vspace*{.3cm}

\noindent
\address{J. Bonet, Instituto Universitario de Matemática Pura y Aplicada IUMPA, Universitat Politècnica de València, 46071 Valencia, Spain \newline
Email: jbonet@mat.upv.es}
\address{ W.J. Ricker: Math.-Geogr. Fakultät, Kath. Universität Eichstätt-Ingolstadt, 85072 Eichstätt, Germany\newline
Email: werner.ricker@ku.de }
\subjclass[2010]{Primary  47A10, 47B37, 47B65, 47L10; Secondary 46A45, 46B45, 47C05}
\keywords{Banach algebra, Banach sequence  space,  Cesàro operator, spectrum, order spectrum.}
\maketitle

\begin{abstract}
The discrete Cesàro operator $ C $ acts continuously in various classical Banach sequence spaces within $ \C^{\N}.$
For the coordinatewise order, many such sequence spaces $ X $ are also complex Banach lattices (eg.  $c_0, \ell^p $ for
$ 1 < p \leq \infty , $ and $ \ces (p)$ for $ p \in \{ 0 \}  \cup  ( 1, \infty )).$ In such Banach lattice sequence spaces,
$ C $ is always a positive operator. Hence, its order spectrum is well defined within the Banach algebra of all regular operators
on  $ X .$ The purpose of this note is to show, for every $ X $ belonging to the above list of Banach lattice sequence spaces, that the order
spectrum $ \si_{\rm o} (C)$ of $ C $  \textit{coincides} with its usual spectrum $ \si ( C)$ when $ C $ is considered as a
continuous linear operator on the Banach space $ X .$
\end{abstract}

\section{Introduction} \label{S1}
Let $ E $ be a complex Banach lattice and $ \cL (E)$ denote the unital Banach algebra of all continuous linear operators from $ E $
into itself, equipped with the operator norm $ \| \cdot \|_{{\rm op}}.$ The unit is the identity operator $ I : E \lra E .$
Associated  with each $ T \in \cL (E)$ is its \textit{spectrum}\/
$$  \textstyle
\si (T) := \{ \la \in \C : ( \la I - T )\  \mbox{ is not invertible in } \
 \cL (E) \}
 $$
 and its resolvent set $ \rho (T) := \C\!\ssm\!\si(T).$ An operator $ T \in \cL (E)$ is called \textit{regular}\/
 if it is a  finite linear combination of \textit{positive operators}. The complex vector space of all regular operators is denoted
 by $ \cL ^r (E);$ it is also a unital Banach algebra for the norm
 \begin{equation} \label{1.1}
  \| T \|_r := \inf \{ \| S \|_{\rm op} : S \in \cL (E), S \geq 0 , | T (z)| \leq S (| z | ) \; \forall \: z \in E \} , \quad T \in \cL ^r (E).
 \end{equation}
 Again $ I  : E \lra E $ is the unit.
 Moreover, $  \| T \|_{\rm op} \leq \| T \|_r $ for $ T \in \cL ^r (E) ,$ with equality whenever $ T \geq 0 $ (i.e., if $ T $ is a positive
 operator). The spectrum of $ T  \in \cL ^r (E),$ considered as an element of the Banach algebra $ \cL ^r (E),$ is denoted by
 $ \si_{\rm o} (T) $ and is called its \textit{order spectrum}. Then $ \rho_{\rm o} (T) := \C\!\ssm\!\si_{\rm o} (T)$ is the \textit{order resolvent}\/ of
 $ T .$ Clearly
 \begin{equation} \label{1.2}
 \si (T) \subseteq \si_{\rm o} (T), \quad T \in \cL^r (E).
 \end{equation}
 From the usual  formula for the spectral radius,  \cite[Ch.I, \S 2, Proposition 8]{BD}, it follows that the spectral
 radii for $ T \in \cL ^r (E) $ satisfy $ r (T) = r _{\rm o} (T)$ whenever $ T \geq 0.$ Standard references for the above concepts and facts are
 \cite{A}, \cite{Sch1}, \cite{Sch2}, for example.

 It is clear from \eqref{1.2} that $ r (T) \leq r_{\rm o} (T)$ for $ T \in \cL^r (E).$ So, if $ r (T) < r_{\rm o}(T),$ then \eqref{1.2} cannot be an equality.
 This is the strategy applied in \cite[pp.79-80]{Sch2} to exhibit a regular operator for which $ \si (T) \subsetneqq \si_{\rm o} (T).$ For an
 example of a \textit{positive operator} $ T $ satisfying $ \si (T) \subsetneqq \si_{\rm o} (T),$ see \cite[pp.283-284]{A}. In the contrary direction, a
 rich supply of classical operators $ T $ for which the equality
 \begin{equation} \label{1.3}
  \si (T) = \si_{\rm o} (T)
  \end{equation}
  is satisfied arise in harmonic analysis,  \cite[Theorem 3.4]{A}.\\

   The aim of this note is to contribute two further classes of
  operators $ T $ which satisfy \eqref{1.3}. In Section \ref{S2} it is shown that in any \textit{Banach function space} $ E ,$ all
  multiplication operators $ T $ by $ L^\infty $-functions are regular operators and satisfy  \eqref{1.3}. This is a consequence of the fact that the algebra
  of such multiplication operators is maximal commutative. Let $  \N := \{ 1, 2, \ldots \} .$
   The remaining three sections deal with the classical \textit{Cesàro operator} $ C : \C^{\N} \lra \C^{\N} $
  defined by
  \begin{equation} \label{1.4}  \textstyle
  C (x) := \left( \frac{1}{n} \sum^n_{ k = 1} x_k \right) ^\infty _{ n = 1} \quad x = ( x_n) ^\infty _{ n = 1} \in \C^{\N} ,
  \end{equation}
  which is clearly a \textit{positive operator}\/ for the coordinatewise order in the positive cone of $ \C^{\N} = \R^{\N} \oplus i \R^{\N}.$ Section \ref{S3}
  establishes some general results for determining the regularity  of linear operators in \textit{Banach lattice sequence spaces}. These results are designed to apply
  to the particular operators $ (C - \la I )^{-1},  $ where  $ C $ is  given in \eqref{1.4}. In Section \ref{S4}
  we will consider the restriction of $ C $ to the Banach lattice sequence spaces $ c_0 $ and $ \ell^p , 1 < p \leq \infty ,$ and show that \eqref{1.3} is satisfied in all
  cases (with $ C$ in place of $ T $). Section  \ref{S5} is devoted to proving the same fact, but now when $ C $ acts in the discrete Cesàro spaces $  \cesp,
  1 < p < \infty, $ and in $ \ces(0).$

  \section{Multiplication operators} \label{S2}

  Let $ \osm $ be a \textit{localizable measure space} (in the sense of \cite[64A]{F}),  that is, the associated measure algebra
  is a complete Boolean algebra and, for every measurable set $ A \in \Si $ with $ \mu (A) > 0 $  there exists $ B \in \Si $
  such that $ B \subseteq A $ and $ 0 < \mu (B) < \infty $ (i.e., $  \mu $ has the finite subset property). All $ \si$-finite measures are localizable,
  \cite[64H Proposition]{F}.
  Every Banach function space $ E $ (of $ \C $-valued functions) over $ \osm$ is a complex Banach lattice for the pointwise
  $\mu$-a.e. order. Given any  $ \varphi \in L^\infty (\mu),$ the multiplication operator $ M_\varphi : E \lra E $ defined by
   $ f \longmapsto
  \varphi f ,$ for $ f \in E ,$ belongs to $ \cL (E)$ and satisfies $ \| M_\varphi \| _{\rm op} = \| \varphi \|_{\infty}  .$
Define a unital, commutative subalgebra of $ \cL ( E) $ by
$$  \textstyle
\cM _E (L^\infty (\mu)) := \{ M_\varphi : \varphi \in L^\infty ( \mu) \} ;
$$
the unit is the identity operator $ I = M_{\mathbf 1} $ where $ \mathbf{1} $ is the constant function 1 on $ \Om.$ Recall that the \textit{commutant}\/
of $ \cM_E (L^\infty (\mu))$ is defined by
$$
\cM _E (L^\infty (\mu))^c := \{ A \in \cL (E)
 : A M_\varphi = M_\varphi A \;\: \forall  \, \varphi \in L^\infty  (\mu) \} \subseteq \cL (E) .
 $$
 It is known that $ \cM_ E (L^\infty ( \mu)) $ is a \textit{maximal commutatitive}, unital subalgebra of $ \cL (E),$ that is,
 $ \cM_E ( L^\infty ( \mu)) = \cM _E ( L^\infty ( \mu))^c ,$ \cite[Proposition 2.2]{dPR}. Moreover, also the \textit{bicommutant}
 $ \cM_E ( L^\infty (\mu))^{cc} = \cM_E (L^\infty ( \mu)). $
 \begin{Prop}  \label{P2.1} \  Let $ \osm$ be a localizable measure space and $ E $ be  a Banach function space over $ \osm.$
 \begin{itemize}
 \item[{\rm (i)}] $\cM_E ( L^\infty ( \mu)) \subseteq \cL^r (E) .$
 \item[{\rm (ii)}]  $ \cM_E ( L^\infty ( \mu))$ is inverse closed in $ \cL (E).$ That is, if $ T \in \cM _E (L^\infty ( \mu)) $
 is invertible in $ \cL (E) $ (i.e., there exists $ S \in \cL (E) $ satisfying $ ST = I = TS),$ then necessarily
 $ S \in \cM _E ( L^\infty ( \mu)).$
 \item[{\rm (iii)}] For every $ T \in \cM_E ( L^\infty (\mu)) $ we have $ \si_{\rm o} (T) = \si (T).$
 \end{itemize}
 \end{Prop}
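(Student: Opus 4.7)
The plan is to prove the three parts in the given order, each building on the previous. The key external input is the cited maximal commutativity of $\cM_E(L^\infty(\mu))$ in $\cL(E)$; once this is in hand, everything else is a short formal manipulation.

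For (i), I would decompose any $\ph \in L^\infty(\mu)$ into its real and imaginary parts and then each of those into positive and negative parts, writing $\ph = (\ph_1^+ - \ph_1^-) + i(\ph_2^+ - \ph_2^-)$ with all four functions nonnegative elements of $L^\infty(\mu)$. Since pointwise multiplication by a nonnegative bounded function preserves the $\mu$-a.e.\ order on $E$, each $M_{\ph_j^\pm}$ is a positive operator in $\cL(E)$. Hence $M_\ph$ is a finite $\C$-linear combination of positive operators and so lies in $\cL^r(E)$.

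For (ii), suppose $T = M_\ph \in \cM_E(L^\infty(\mu))$ has an inverse $S \in \cL(E)$. I would exploit the commutativity of pointwise multiplication in $L^\infty(\mu)$: for every $\psi \in L^\infty(\mu)$ one has $M_\ph M_\psi = M_\psi M_\ph$. Multiplying this identity on the left and right by $S$ and using $SM_\ph = I = M_\ph S$ yields $M_\psi S = S M_\psi$. Thus $S \in \cM_E(L^\infty(\mu))^c$, and the cited equality $\cM_E(L^\infty(\mu))^c = \cM_E(L^\infty(\mu))$ from \cite[Proposition 2.2]{dPR} forces $S \in \cM_E(L^\infty(\mu))$.

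For (iii), the inclusion $\si(T) \subseteq \si_{\rm o}(T)$ is \eqref{1.2}. For the reverse, take $\la \in \rho(T)$ where $T = M_\ph$. Then $\la I - T = M_{\la\mathbf{1}-\ph} \in \cM_E(L^\infty(\mu))$ is invertible in $\cL(E)$; by (ii), its inverse lies in $\cM_E(L^\infty(\mu))$, and by (i) that inverse is then an element of $\cL^r(E)$. Consequently $\la I - T$ is invertible in the Banach algebra $\cL^r(E)$, so $\la \in \rho_{\rm o}(T)$, giving $\si_{\rm o}(T) \subseteq \si(T)$. The only nontrivial ingredient is the maximal commutativity statement; all other steps are routine algebraic manipulations, so I do not expect a serious obstacle beyond correctly invoking that result.
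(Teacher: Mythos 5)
Your proposal is correct and follows essentially the same route as the paper: the same positive-part decomposition for (i), maximal commutativity from \cite[Proposition 2.2]{dPR} for (ii), and the same reduction of (iii) to (i) and (ii). The only cosmetic difference is that in (ii) you inline the standard argument that a maximal commutative subalgebra is inverse closed, whereas the paper simply cites \cite[Ch.II, \S 15, Theorem 4]{BD} for that fact.
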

\begin{proof} \ (i) \ Let $ \varphi \in L^\infty (\mu).$ Then $ \varphi =   [({\rm Re} \,  \varphi)^+  - ({\rm Re} \, \varphi)^- ] +  i   [ ({\rm Im} \, \varphi)^+
- (\mbox{Im} \, \varphi)^- ] $ with all four functions $ (\rm{Re} \, \varphi)^+, (\rm{Re} \,  \varphi)^-, (\rm{Im} \,  \varphi)^+, (\rm{Im} \,  \varphi)^- $
belonging to the positive cone $ L^\infty ( \mu)^+ $ of $ L^\infty (\mu).$ Since
$
M_\varphi = [ M_{(\rm{Re} \,  \varphi )^+} - M_{(\rm{Re} \,  \varphi)^-} ] + i [ M_{(\rm{Im} \, \varphi)^+}  - M_{(\rm{Im} \,  \varphi)^-} ]
$  is a linear combination of positive operators,
it is clear that $ M_\varphi \in \cL^r (E).$

(ii) \ Since $ \cM _ E (L^\infty (\mu))$ is maximal commutative in $ \cL (E), $ it follows  that $ \cM _E (L^\infty (\mu))$ is inverse closed in $ \cL ( E), $
 \cite[Ch.II, \S 15, Theorem 4]{BD}.

 (iii) \ In view of \eqref{1.1} it suffices to show that $ \rho ( T) \subseteq \rho_{\rm o} (T).$ Suppose that $ T = M_\varphi $
 with $ \varphi \in L^\infty ( \mu).$ Fix $ \lambda \in \rho (T).$ Then $ \lambda I - T = M_{ ( \lambda {\mathbf 1}  - \varphi)} $ belongs to $ \cM _E ( L^\infty (\mu))$
 because $ ( \lambda {\mathbf 1}  - \varphi ) \in L^\infty (\mu). $ Since $ M_{ ( \lambda {\mathbf 1} - \varphi )} $ is invertible in $ \cL (E),$ it follows from part (ii) that actually $ ( \lambda I - T)^{-1}
 \in \cM_E ( L^\infty ( \mu))$ and hence, by part (i), that also $ ( \lambda I - T )^{-1} \in \cL ^r (E) .$
 \end{proof}

 \begin{Rem} \label{R2.2} {\rm We point out that  $ \| T \|_{\rm op} = \| T \|_r $ for each $ T \in \cM_E (L^\infty (\mu)).$
 Indeed, let $ \varphi \in L^\infty ( \mu) $ satisfy $ T = M_\varphi ,$ in which case $ \| M_\varphi \|_{\rm op} = \|\varphi \|_\infty.$
 Define $ S := \| \varphi \| _\infty I $ and note that $ S \geq 0 $ with $ \| S \|_{\rm op} = \| \varphi \| _\infty .$ Moreover,
 $$ \textstyle
  | M _\varphi (f) | = | \varphi f | \leq \| \varphi \| _\infty | f | =  S (| f | ), \quad f \in E ,
  $$
 and so $ \| T \|_r \leq \| S \|_{\rm op} = \| \varphi \| _\infty = \| T  \|_{\rm op} ;$ see \eqref{1.1}. The reverse inequality
 $ \| T \|_{\rm op} \leq \| T \|_r $ always holds.  }
 \end{Rem}

 \section{The Cesàro operator in Banach sequence spaces} \label{S3}

 We begin with some preliminaries. Equipped with the topology of pointwise convergence $ \C^{\N} $ is a locally convex Fréchet space. Let $ A = ( a_{nm})^\infty _{n, m = 1} $
 be any lower triangular (infinite) matrix, i.e., $ a_{nm} = 0 $ whenever $ m > n .$ Then $ A $ induces the continuous linear operator $ T _A : \C^{\N} \lra \C ^{\N} $
 defined by
 \begin{equation}  \label{3.1} \textstyle
 T_A (x) := \left( \sum ^\infty_{ m = 1} a_{nm} x_m \right) ^\infty _{ n = 1}, \quad x \in \C^{\N}.
 \end{equation}
 For $ x \in \C^{\N} $ define $ | x | := ( | x _n | )^\infty _{ n = 1} .$ Then also $ | x | \in \C ^{\N} .$  A vector subspace $ X \subseteq \C^{\N} $ is
 called \textit{solid}\/  (or an \textit{ideal}) if $ y \in X $ whenever $ x \in X $ and $ y \in \C^{\N} $ satisfy $ | y | \leq | x | .$ It is always
 assumed that $ X $ contains the vector space consisting of all elements of $ \C^{\N} $ which have only finitely many non-zero coordinates. In
 addition, it is assumed that $ X $ has a norm $ \| \cdot \| _X $ with respect to which it is  a complex  \textit{Banach lattice}\/ for the
 \textit{coordinatewise order}\/ and such that the natural inclusion $ X  \subseteq \C^{\N} $ is continuous. Under the previous requirements $ X $
 is called a \textit{Banach lattice  sequence space}.

 \begin{Lem} \label{L3.1} Let $ A = ( a_{nm})^\infty_{ n, m = 1} $ be a lower triangular matrix with all entries non-negative real numbers  and
 $ X \subseteq \C^{\N} $ be a Banach lattice  sequence space such that $ T_A (X) \subseteq X.$ Let $ B = ( b_{nm} )^\infty_{ n, m = 1} $ be any
 matrix such that
 \begin{equation} \label{3.2}
  | b_{nm}| \leq a_{nm} , \quad  n, m \in \N .
  \end{equation}
  Then the restricted operator $ T_A   : X \lra X $ belongs to $ \cL ( X).$ Moreover, $ T_B : \C^{\N} \lra \C^{\N} $ satisfies  $ T_B (X) \subseteq X $ and
  the restricted operator $ T_B  : X  \lra X $ also belongs to  $ \cL (X). $  In addition,  $ \| T_B \|_{\rm op} \leq \| T _A \| _{\rm op} .$
  \end{Lem}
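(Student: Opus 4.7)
The plan is to deduce everything from the pointwise domination $|T_B(x)| \le T_A(|x|)$ holding coordinatewise in $\C^{\N}$, combined with the two defining properties of a Banach lattice sequence space: solidity, and monotonicity of the norm ($|y|\le|z|$ in $\C^{\N}$ with $z\in X$ implies $y\in X$ and $\|y\|_X\le \|z\|_X$).

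First I would dispose of the continuity of $T_A:X\to X$. Since $T_A:\C^{\N}\to\C^{\N}$ is continuous and $X\hookrightarrow\C^{\N}$ continuously, the graph of $T_A|_X$ is closed in $X\times X$; by the closed graph theorem $T_A\in\cL(X)$.

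Next I would establish the key coordinatewise inequality. Because $A$ is lower triangular, each row $(a_{nm})_m$ has only finitely many non-zero entries, so for every $x\in\C^{\N}$ and $n\in\N$ the triangle inequality gives
\[
\bigl|T_B(x)_n\bigr| \;=\; \Bigl|\sum_{m=1}^{n} b_{nm}x_m\Bigr| \;\le\; \sum_{m=1}^{n} |b_{nm}|\,|x_m| \;\le\; \sum_{m=1}^{n} a_{nm}|x_m| \;=\; T_A(|x|)_n,
\]
where \eqref{3.2} was used in the last inequality. Hence $|T_B(x)|\le T_A(|x|)$ in $\C^{\N}$.

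Now fix $x\in X$. By solidity of $X$ we have $|x|\in X$, and by hypothesis $T_A(|x|)\in X$. The inequality $|T_B(x)|\le T_A(|x|)$ together with solidity yields $T_B(x)\in X$, establishing $T_B(X)\subseteq X$. Finally, monotonicity of the lattice norm and the already proven continuity of $T_A$ give
\[
\|T_B(x)\|_X \;=\; \bigl\||T_B(x)|\bigr\|_X \;\le\; \|T_A(|x|)\|_X \;\le\; \|T_A\|_{\rm op}\,\bigl\||x|\bigr\|_X \;=\; \|T_A\|_{\rm op}\,\|x\|_X,
\]
which simultaneously yields $T_B\in\cL(X)$ and the norm estimate $\|T_B\|_{\rm op}\le\|T_A\|_{\rm op}$. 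There is no serious obstacle here: the only subtlety is that one must apply solidity before invoking norm monotonicity, since the estimate $\|T_B(x)\|_X\le\|T_A(|x|)\|_X$ only makes sense once $T_B(x)$ is known to lie in $X$.
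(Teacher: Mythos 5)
Your proof is correct and follows essentially the same route as the paper: closed graph theorem for $T_A|_X$, the coordinatewise domination $|T_B(x)|\le T_A(|x|)$ via \eqref{3.2}, then solidity to get $T_B(x)\in X$ and monotonicity of the lattice norm for the operator-norm estimate. No gaps.
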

\begin{proof} \
Condition \eqref{3.2}  implies that $ B$ is also a lower triangular matrix. Moreover, the continuity of  both $ T_A : \C^{\N}  \lra \C^{\N} $ and of the inclusion map  $ X \subseteq \C^{\N} $
imply, via the Closed Graph Theorem in the Banach space  $ X ,$ that the restricted operator $ T_A \in \cL (X).$

Given  $ x \in X $ we have for each $ n \in \N ,$  via \eqref{3.2}, that
$$ \textstyle
 (T_B (x))_n =  | \sum^\infty_{ m = 1} b_{nm} x_m | \leq \sum^\infty_{ m = 1} | b_{nm}| \cdot | x_m |  \leq \sum ^\infty _{ m = 1} a_{nm} | x _m|  = (T_A ( | x |))_n .
 $$
 Since $ X $ is solid and $ T_ A (| x | ) \in X,$ these inequalities and \eqref{3.1} imply that $ T_B (x) \in X.$ Moreover, as $ \| \cdot \| _X $ is a lattice norm it
 follows that
 $$ \begin{array}{lcl}
 \| T_B (x) \|_X  & = & \| ( \sum ^\infty _{ m = 1} b_{nm} x _m ) ^\infty_{n = 1} \| _X

  \;  \leq \;   \| ( \sum^\infty_{ m = 1}  a_{nm}  | x_m | ) ^\infty_{ n = 1 } \| _X \\[1.5ex]

  & = &  \| T_A ( | x | ) \|_X \;  \leq \;  \| T_A \|_{\rm op} \| x \| _X ,
 \end{array}
 $$
 for each $ x \in X, $  where the stated series are actually finite sums.
 Hence, $ \| T_B\|_{\rm op} \leq \| T_A \| _{\rm op} $ and the proof is complete.
 \end{proof}

 Since the operator $ T_A $ as given in Lemma \ref{L3.1} satisfies $ T_A \geq 0 ,$ it is clearly regular.

 \begin{Cor} \label{C3.2}
 Let $ A  = ( a_{nm} )^\infty _{n, m = 1} $ be a lower triangular matrix with non-negative real  entries and $ X \subseteq \C^{\N} $ be a
 Banach lattice sequence space such that $ T_A ( X) \subseteq X.$ Let $ B = ( b_{nm} )^\infty_{n,m = 1} $ be any matrix satisfying
 \eqref{3.2}. Then the operator $ T_B \in \cL (X)$ is necessarily regular, that is, $ T  _B \in \cL^r (X).$
 \end{Cor}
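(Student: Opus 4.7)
The plan is to express $T_B$ as a complex linear combination of four positive operators on $X$, by splitting each entry $b_{nm}$ into its real/imaginary, positive/negative parts and applying Lemma \ref{L3.1} to each piece separately.

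First I would define, for each pair $(n,m)\in\N^2$, the four non-negative real numbers
\eqs{b^{(1)}_{nm} \ce (\operatorname{Re} b_{nm})^+,\ b^{(2)}_{nm} \ce (\operatorname{Re} b_{nm})^-,\ b^{(3)}_{nm} \ce (\operatorname{Im} b_{nm})^+,\ b^{(4)}_{nm} \ce (\operatorname{Im} b_{nm})^-.}
Each of these satisfies $0 \leq b^{(j)}_{nm} \leq |b_{nm}| \leq a_{nm}$, and each induced matrix $B^{(j)} \ce (b^{(j)}_{nm})$ is lower triangular (since $A$ is). The hypotheses of Lemma \ref{L3.1} are therefore met for each $B^{(j)}$ with the same dominating matrix $A$, so $T_{B^{(j)}}(X) \subseteq X$ and the restricted operator $T_{B^{(j)}}$ lies in $\cL(X)$. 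Because all entries of $B^{(j)}$ are non-negative, the operator $T_{B^{(j)}}$ maps the positive cone of $X$ into itself (clear from \eqref{3.1}), so $T_{B^{(j)}} \geq 0$ in $\cL(X)$.

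Finally, because each row of $B$ has only finitely many non-zero entries, the scalar identity $b_{nm} = b^{(1)}_{nm} - b^{(2)}_{nm} + i b^{(3)}_{nm} - i b^{(4)}_{nm}$ lifts termwise to the operator identity
\eqs{T_B = T_{B^{(1)}} - T_{B^{(2)}} + i T_{B^{(3)}} - i T_{B^{(4)}}}
on $\C^{\N}$, and hence also on $X$ after restriction. This exhibits $T_B$ as a finite complex linear combination of positive operators on $X$, giving $T_B \in \cL^r(X)$. The argument is essentially formal once Lemma \ref{L3.1} is in hand; the only minor point of care is the finiteness of each row sum, which ensures that the splitting of $T_B$ is rigorous without any convergence issues and allows the operator identity to be read off entrywise from \eqref{3.1}.
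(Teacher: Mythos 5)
Your proposal is correct and follows essentially the same route as the paper: split each entry $b_{nm}$ into the four non-negative pieces $(\operatorname{Re} b_{nm})^{\pm}$, $(\operatorname{Im} b_{nm})^{\pm}$, observe that each resulting matrix is dominated entrywise by $A$ so that Lemma \ref{L3.1} applies, and write $T_B$ as the corresponding complex linear combination of the four positive operators. Your extra remark about finite row sums making the operator identity unambiguous is a sensible (if implicit in the paper) point of care; no gaps.
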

 \begin{proof}  \  Define the non-negative real numbers $ s_{nm} := (\mbox{Re} \, b_{nm})^+, u_{nm} := (\mbox{Re} \,  b_{nm})^-,
 v _{nm} := (\mbox{Im}\,  b_{nm})^+ $ and $ w_{nm} := (\mbox{Im} \,  b_{nm})^- $ for each $ n , m \in \N. $ Then
 $ b_{nm} = (s_{nm} - u_{nm}) + i (v_{nm} - w_{nm} ) $ and  $ \{ s_{nm}, u_{nm}, v_{nm}, w_{nm}  \} \subseteq [0, a_{nm}] $
 for $ n , m \in \N.$ Setting $ S := ( s_{nm} )^\infty_{n, m = 1} , U := ( u_{nm} ) ^\infty_{ n,m = 1} , V := (v_{nm} )^\infty_{n,m = 1} $ and
 $ W := (w_{nm} )^\infty _{n, m = 1} $ it is clear from the definition  \eqref{3.1} that each operator
 $ T_S \geq 0, T_U \geq 0, T_V \geq 0 $ and  $ T_W \geq 0 $ (in $X$) belongs to $ \cL (X) ;$
 see Lemma  \ref{L3.1}. Since $ T_B = (T_S - T_U) + i (T_V - T_W ),$ it follows that $ T_B \in \cL^r (X).$
 \end{proof}
 Together with appropriate estimates, Corollary \ref{C3.2}  will be the main ingredient required to establish \eqref{1.3}
 for $ C $ (in place of $ T $) when it acts  in various classical Banach lattice sequence spaces $ X .$

 Let $ \Si_0 := \{ 0 \} \cup \{ \frac{1}{n}  : n \in \N \}.$ We recall the formula for the inverses
 $ (C - \la I )^{-1} : \C^{\N} \lra \C^{\N} $ whenever $ \lambda \in \C\!\ssm\!\Si_0,$ \cite[p.266]{R}.
 Namely, for $ n \in \N $ the $n$-th row of the lower triangular matrix determining
 $ (C - \la I )^{-1} $ has the entries
 \begin{equation} \label{3.3}  \textstyle
 \frac{-1}{n \la^2 \prod^n _{k = m} ( 1 - \frac{1}{k \lambda})} , \quad
 1 \leq m < n , \quad   \mbox{ and }  \quad
  \frac{n}{1 - n \lambda} = \frac{1}{ ( \frac{1}{n} - \la)} , \quad m = n ,
  \end{equation}
  with all other entries in row $ n$ being 0. We write
  \begin{equation} \label{3.4} \textstyle
   (C - \la I )^{-1} = T_{D_\la} - \frac{1}{\la^2} T_{E_\la} ,
   \end{equation}
   where the diagonal matrix $ D_\la = (d_{nm} ( \la))^\infty_{ n,m = 1} $ is given by
   \begin{equation} \label{3.5}  \textstyle
    d_{nn}  ( \la ) := \frac{1}{  ( \frac{1}{n} - \la ) } \quad \mbox{ and } \quad d_{nm} ( \la ) := 0 \quad \mbox{ if }  \:
    n \neq m .
\end{equation}
Setting $ \gamma [\la] := \mbox{dist} ( \la, \Si_0 ) > 0$ it is routine to check that
\begin{equation} \label{3.6} \textstyle
 | d_{nn}  ( \la ) | \leq \frac{1}{\ga [\la]} , \quad n \in \N, \quad \la \in \C\!\ssm\!\Si_0 .
 \end{equation}
 Moreover, $ E_\la = ( e_{nm}  ( \la ))^\infty_{ n, m = 1} $ is the lower triangular matrix given by
 $ e_{1m} ( \la) = 0, $ for $ m \in \N , $ and for all $ n \geq 2  $ by
\begin{equation} \label{3.7}  \textstyle
 e_{nm} ( \la) := \left\{ \begin{array}{ccl}
  \frac{1}{n \,   \Pi ^n _{k = m} (1 - \frac{1}{k \la})}  \quad & \mbox{if}  \quad & 1 \leq m < n  \\[1.5ex]

  0 & \mbox{if} \quad & m \geq n .
  \end{array}  \right. \end{equation}

  \begin{Lem} \label{L3.3}
   Let $ X \subseteq \C^{\N} $ be any Banach lattice  sequence space. For each $  \la \in \C\!\ssm\!\Si_0$ the diagonal operator $ T_{D_\la} , $ with
   $ D_\la  = (d_{nm} (  \la ))^\infty_{ n,m = 1} $ given by \eqref{3.5}, is regular in $ X ,$ that is,
   $ T_{D_\la} \in \cL ^r ( X).$
   \end{Lem}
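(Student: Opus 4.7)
The plan is to deduce this lemma as an essentially immediate application of Corollary \ref{C3.2}, using as dominating matrix a scalar multiple of the identity.

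First, I would set $\ga := \ga[\la] = \mbox{dist}(\la,\Si_0) > 0$ and introduce the diagonal matrix $A = (a_{nm})^\infty_{n,m=1}$ with entries $a_{nn} := 1/\ga$ and $a_{nm} := 0$ for $n\neq m$. This $A$ is a lower triangular matrix whose entries are non-negative real numbers, and the associated operator is simply $T_A = \tfrac{1}{\ga} I$. In particular $T_A(X) \subseteq X$ and $T_A \in \cL(X)$, so the first hypothesis of Corollary \ref{C3.2} is satisfied.

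Next, I would verify the pointwise domination \eqref{3.2} with $B = D_\la$. For the off-diagonal entries $(n\neq m)$ both sides are $0$. For the diagonal entries, the estimate \eqref{3.6} gives
\eqs{\textstyle
|d_{nn}(\la)| = \frac{1}{|\frac{1}{n} - \la|} \leq \frac{1}{\ga} = a_{nn}, \quad n \in \N.
}
Hence $|d_{nm}(\la)| \leq a_{nm}$ for all $n,m \in \N$.

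With both hypotheses in place, Corollary \ref{C3.2} applies and yields $T_{D_\la} \in \cL^r(X)$, as required. There is no genuine obstacle here; the real content of the lemma is the uniform bound \eqref{3.6} on the diagonal entries, which has already been recorded, so the argument reduces to invoking the regularity principle of Corollary \ref{C3.2}.
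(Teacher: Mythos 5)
Your proposal is correct and follows exactly the paper's own argument: take $A = \frac{1}{\ga[\la]}I$ as the dominating non-negative lower triangular matrix, use \eqref{3.6} to verify \eqref{3.2} with $B = D_\la$, and invoke Corollary \ref{C3.2}. Nothing to add.
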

   \begin{proof} \  Fix $ \la \not\in \Si_0$ and let $ A := \frac{1}{\ga [\la]} I,$ where $ I $ is the identity matrix in $ \C^{\N} ,$
   in which case $ T_A (X) \subseteq X$ is clear. It follows from \eqref{3.6} that the matrix $ B := D_\la $ satisfies \eqref{3.2}.
   Hence, the regularity of $ T_{D_\la} $ in $ X $ follows from Corollary  \ref{C3.2}.
   \end{proof}
   \begin{Rem} \label{R3.4} {\rm \ (i) \ Since any Banach lattice  sequence space $ X  \subseteq \C^{\N} $ is a  Banach function space over the $ \sigma$-finite
   measure space $ ( \N, 2 ^{\N} , \mu),$ relative to counting measure $ \mu,$ and the function $ n \longmapsto d_{nn}  ( \la)$
   on $ \N $ belongs to $ L^\infty ( \mu) $ by \eqref{3.6}, the regularity of $ T_{D_\la} \in \cL (X) $ also follows from
   Proposition \ref{P2.1}(i).

   (ii) \ For appropriate $ X $ and $ \la \not\in \Si_0,$ it is clear from \eqref{3.4} and Lemma \ref{L3.3}  that the regularity of
   $ (C - \la I )^ {-1}  \in \cL (X) $ is completely determined by the matrix  $ E_ \la .$

   The following inequalities will be needed in the sequel. For $ \alpha < 1 $ we refer to \cite[Lemma 7]{R} and for general $ \alpha \in \R $
   to \cite[Lemma 3.2(i)]{ABR1}. }
   \end{Rem}
  \begin{Lem} \label{L3.5} Let $ \la \in \C\!\ssm\!\Si_0 $ and set $ \alpha := \mbox{Re} ( \frac{1}{\la}).$ Then there exist positive
  constants $ P ( \alpha) $  and $ Q ( \alpha) $ such that
  \begin{equation} \label{3.8} \textstyle
   \frac{ P ( \alpha)}{n ^\alpha} \leq \prod^n_{ k = 1} | 1 - \frac{1}{k \la} | \leq \frac{Q ( \alpha)}{ n^\alpha} , \quad n \in \N  .
   \end{equation}
 \end{Lem}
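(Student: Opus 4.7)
The plan is to reduce the product to a partial sum of the harmonic series by taking logarithms, and then to read off the $n^{-\alpha}$ behaviour from $\sum_{k=1}^n 1/k = \log n + O(1)$. Since $\la \notin \Si_0$, none of the factors $1 - \frac{1}{k\la}$ vanishes, so $\log \prod_{k=1}^n |1 - \frac{1}{k\la}| = \sum_{k=1}^n \log |1 - \frac{1}{k\la}|$ makes sense. I would fix an integer $k_0 = k_0(\la) \geq 1$ large enough that $|1/(k\la)| \leq 1/2$ for all $k \geq k_0$, and split the sum as a finite prefix plus a tail.

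For the tail, the identity $\log|1-z| = \tfrac12 \log(1 - 2\,\mathrm{Re}(z) + |z|^2)$ combined with a Taylor expansion of $\log(1+t)$ around $t=0$ gives, uniformly for $|z| \leq 1/2$, the estimate $\log|1-z| = -\mathrm{Re}(z) + r(z)$ with $|r(z)| \leq K|z|^2$ for some absolute constant $K$. Substituting $z = 1/(k\la)$ and summing from $k_0$ to $n$, the error part is bounded in absolute value by $K |\la|^{-2} \sum_{k \geq k_0} k^{-2}$, which is a finite quantity depending only on $\la$, while the main part equals $-\alpha \sum_{k=k_0}^{n} \frac{1}{k} = -\alpha \log n + c(\la) + o(1)$. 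Exponentiating gives $\prod_{k=k_0}^{n} |1 - \frac{1}{k\la}| = n^{-\alpha} \cdot h_n(\la)$, where $h_n(\la)$ lies between two strictly positive constants (depending on $\la$) for every $n \geq k_0$.

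Finally, the finite prefix $\prod_{k=1}^{k_0 - 1} |1 - \frac{1}{k\la}|$ is an $n$-independent strictly positive constant, since $\la \notin \Si_0$ guarantees that no factor is zero; absorbing this into the bounds yields the desired $P(\alpha)$ and $Q(\alpha)$. To cover the remaining small values of $n$ (namely $1 \leq n < k_0$), I would note that only finitely many values of $n$ are involved and each corresponding product is a positive real number, so enlarging $Q(\alpha)$ and shrinking $P(\alpha)$ if necessary gives the two-sided estimate for all $n \in \N$. The main obstacle is really bookkeeping: making sure the error in the $\log|1-z|$ expansion is quantified by a constant that is summable in $k$, and then ensuring that the constants coming from the finite prefix and from the ``initial'' $n < k_0$ cases are correctly absorbed so that the inequality holds uniformly in $n$, not merely asymptotically.
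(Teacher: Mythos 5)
Your proof is correct. Bear in mind that the paper itself offers no proof of Lemma \ref{L3.5}: it simply quotes the estimate from \cite[Lemma 7]{R} (for $\alpha<1$) and \cite[Lemma 3.2(i)]{ABR1} (for general $\alpha$). Your argument --- take logarithms, extract the main term $-\mathrm{Re}(1/(k\lambda))=-\alpha/k$ whose partial sums are $-\alpha\log n+O(1)$, and dominate the remainder by the convergent series $K|\lambda|^{-2}\sum_k k^{-2}$ --- is the standard elementary route and is essentially what the cited sources do; an alternative seen in the literature is to use the identity $\prod_{k=1}^n(1-z/k)=\Gamma(n+1-z)/\bigl(\Gamma(n+1)\Gamma(1-z)\bigr)$ with $z=1/\lambda$ together with Stirling's formula, which yields the sharper asymptotics $\prod_{k=1}^n|1-\tfrac{1}{k\lambda}|\sim n^{-\alpha}/|\Gamma(1-\tfrac{1}{\lambda})|$. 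Two minor points of bookkeeping, neither of which is a real gap. First, in the expansion $\log|1-z|=\tfrac12\log(1+t)$ with $t=-2\,\mathrm{Re}(z)+|z|^2$, the quantity $|t|$ can exceed $1$ when $|z|\le\tfrac12$, so the Taylor \emph{series} of $\log(1+t)$ need not converge; either shrink to $|z|\le\tfrac14$, or note that $1+t=|1-z|^2\ge\tfrac14$ so the uniform bound $|\log(1+t)-t|\le Ct^2$ still holds, or, simplest of all, write $\log|1-z|=\mathrm{Re}\,\mathrm{Log}(1-z)=-\mathrm{Re}(z)+O(|z|^2)$ directly from the principal branch for $|z|\le\tfrac12$. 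Second, your constants depend on $\lambda$ (through $k_0$ and $|\lambda|^{-2}$) and not only on $\alpha=\mathrm{Re}(1/\lambda)$; this is unavoidable (already for $n=1$ the product $|1-1/\lambda|$ is unbounded as $\mathrm{Im}(1/\lambda)\to\infty$ with $\alpha$ fixed) and harmless, since the lemma is only ever applied for a single fixed $\lambda$, the notation $P(\alpha)$, $Q(\alpha)$ being shorthand for that dependence.
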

 \section{The classical spaces $ \ell^p, 1 < p \leq \infty,$ and $ c_0$}  \label{S4}
 For each $ 1 < p \leq \infty $ let $ C_p \in \cL ( \ell^p )$ denote the Cesàro operator as given by \eqref{1.4}
 when it is restricted to $ \ell^p .$ As a consequence of  Hardy's inequality, \cite[Theorem 326]{HLP}, it is known that
 $ \| C_p \|_{\rm op} = p', $ where $ \frac{1}{p} + \frac{1}{p'} = 1  $ (with $ p': = 1 $ when $ p = \infty ).$
 Concerning the spectrum of $ C_p $ we have
 \begin{equation}  \label{4.1} \textstyle
  \si ( C_p ) = \{ \la \in \C : | \la - \frac{p'}{2} | \leq \frac{p'}{2} \}, \quad 1 < p \leq \infty .
  \end{equation}
  Various proofs of \eqref{4.1}  are known for $ 1 < p < \infty , $ \cite{CR1},  \cite{L1}, \cite{L2}, \cite{Rh1}, \cite{Rh2};
  see the discussion on p.268 of \cite{CR1}. For the case $ p = \infty $ we refer to \cite[Theorem 4]{L1}, for example.

  \begin{Rem} \label{R4.1}  {\rm
  For each $ \la \neq 0 $ set $ \alpha := \mbox{Re}   ( \frac{1}{\la} ).$ Then, for any $ b > 0 $ we have
  $$    \textstyle
  \alpha < \frac{1}{b} \: \mbox{ and only if } \:
   | \la - \frac{b}{2}| > \frac{b}{2}.
  $$
  The corresponding results for $ \alpha > \frac{1}{b} $ and $ \alpha = \frac{1}{b} $ also hold. }
  \end{Rem}

  \begin{Prop} \label{P4.2} \ For each $ 1 < p < \infty $ the order spectrum of the positive operator $ C_p \in \cL ( \ell^p ) $ satisfies
  \begin{equation}  \label{4.2}
   \si_ {\rm o} ( C_p ) = \si ( C_p ) .
 \end{equation}
 \end{Prop}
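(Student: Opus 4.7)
The plan is to establish the non-trivial inclusion $\rho(C_p) \subseteq \rho_{\rm o}(C_p)$; in view of \eqref{1.2} this gives \eqref{4.2}. First I fix $\la \in \rho(C_p)$. By \eqref{4.1} combined with Remark \ref{R4.1} (taken with $b = p'$) this is equivalent to the condition $\al := \mathrm{Re}(1/\la) < 1/p'$. A routine check with \eqref{4.1} also shows $\Si_0 \subseteq \si(C_p)$, so that $\la \in \C\ssm\Si_0$ and the explicit resolvent factorisation \eqref{3.4} is available. Lemma \ref{L3.3} already places $T_{D_\la}$ in $\cL^r(\ell^p)$, so the problem reduces to proving that the off-diagonal piece $T_{E_\la}$ is also regular on $\ell^p$.

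Next I would bound $|e_{nm}(\la)|$ from above. Applying Lemma \ref{L3.5} to both $\prod_{k=1}^n |1 - 1/(k\la)|$ (lower bound) and $\prod_{k=1}^{m-1} |1 - 1/(k\la)|$ (upper bound) and taking the quotient yields a constant $K = K(\la) > 0$ with
\begin{equation*}
|e_{nm}(\la)| \le K \, n^{\al - 1} \, m^{-\al}, \qquad 1 \le m < n,
\end{equation*}
the case $m = 1$ being handled directly by the lower bound in \eqref{3.8}. Set $a_{nm} := K n^{\al-1} m^{-\al}$ for $1 \le m \le n$ and $a_{nm} := 0$ otherwise; then $A = (a_{nm})$ is lower triangular with non-negative entries and dominates $E_\la$ entrywise. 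By Corollary \ref{C3.2}, the regularity of $T_{E_\la}$ on $\ell^p$ will follow as soon as one checks $T_A(\ell^p) \subseteq \ell^p$.

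For that last point I would invoke a weighted Hardy estimate: pick any $r$ in the open interval $(\al - 1/p', 0)$, which is non-empty precisely because $\al < 1/p'$. Splitting $m^{-\al} x_m = m^{-\al+r} \cdot m^{-r} x_m$ and applying H\"older to $(T_A x)_n = K n^{\al - 1} \sum_{m=1}^n m^{-\al} x_m$, the choice $r > \al - 1/p'$ makes the first H\"older factor of order $n^{1/p' + r - \al}$, so a short calculation gives
\begin{equation*}
|(T_A x)_n|^p \le C \, n^{pr - 1} \sum_{m=1}^n m^{-rp} |x_m|^p.
\end{equation*}
Summing in $n$ and interchanging the order of summation, the choice $r < 0$ renders $\sum_{n \ge m} n^{pr - 1}$ convergent and of order $m^{pr}$; the weights then cancel, yielding $\|T_A x\|_p \le C' \|x\|_p$. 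Corollary \ref{C3.2} therefore produces $T_{E_\la} \in \cL^r(\ell^p)$, and combining with Lemma \ref{L3.3} in \eqref{3.4} gives $(C_p - \la I)^{-1} \in \cL^r(\ell^p)$, i.e.\ $\la \in \rho_{\rm o}(C_p)$.

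The crux is the Hardy-type estimate above, whose numerology is sharp: the condition $\al < 1/p'$ is at once the description of $\rho(C_p)$ obtained from \eqref{4.1} via Remark \ref{R4.1} and the condition needed for the admissible interval for $r$ to be non-empty. This coincidence is what forces the order spectrum to agree with the Banach-space spectrum, and it is the ingredient that will presumably need to be replaced by something more delicate in Section \ref{S5} when $\ell^p$ is traded for $\ces(p)$.
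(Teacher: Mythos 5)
Your proof is correct and follows the same route as the paper's: dominate $E_\la$ entrywise by the positive lower triangular matrix $A$ with entries $K\,n^{\al-1}m^{-\al}$, invoke Corollary \ref{C3.2} together with Lemma \ref{L3.3} and \eqref{3.4}, and thereby reduce everything to the boundedness of $T_A$ on $\ell^p$ under the condition $\al=\mathrm{Re}(1/\la)<1/p'$ coming from Remark \ref{R4.1}. The only difference is that where the paper cites the Proposition on p.~269 of \cite{CR1} for the boundedness of the dominating operator $G_\la$, you prove it directly by a weighted H\"older/Schur-test argument (your choice of $r\in(\al-1/p',0)$ is exactly what makes both sums converge), which renders that step self-contained but is not a genuinely different method.
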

 \begin{proof}  \  Via  \eqref{1.2} it suffices to verify that $ \rho ( C_p ) \subseteq \rho_{\rm o} (C_p). $\\

 With the notation of \eqref{3.4} and \eqref{3.7} it is shown on p.269 of \cite{CR1}, as a consequence
 of \eqref{3.8} in Lemma \ref{L3.5} above, that for every $ \la \neq 0 $ satisfying   $ \alpha := \mbox{Re}(\frac{1}{\la}) < 1 $  there exists a constant
 $ \beta ( \la ) > 0 $ such that
 \begin{equation} \label{4.3} \textstyle
 | e_{nm} (  \la ) | \leq \frac{\beta ( \la)}{ n^{1 - \alpha} m^\alpha} , \quad 1 \leq m \leq n , \quad n \in \N .
 \end{equation}
 Set $ B := E_\la $ and let $ A $ be the lower triangular matrix whose entries $ a _{nm} ( \la ) \geq 0 $ are given
 by the right-side of \eqref{4.3} for each $ n \in \N $ and
 $ 1 \leq m \leq n $ (and 0 otherwise). According to \eqref{4.3} the  matrices $ A $ and $ B $ satisfy \eqref{3.2}.
 Let  $ X := \ell^p $ for $ p \in (1, \infty )$ fixed. Then Corollary \ref{C3.2} implies that $ E_\la $ will be
 regular (i.e.,  $ T_{ E_\la} \in \cL^r ( \ell^p))$ whenever $ T_A ( \ell^p ) \subseteq \ell^p .$ Note that $ T_A \in \cL ( \C^{\N} ) $
 is given by
\begin{equation} \label{4.4} \textstyle
x \longmapsto \beta ( \la) \left( \frac{1}{n ^{1 - \alpha}} \sum^n_{ m = 1} \frac{x_m}{m^\alpha} \right) ^\infty_{ n = 1}  :=
\beta ( \la ) G_\la ( x ), \quad x \in \C^{\N}.
\end{equation}
So, if $ \mbox{Re}( \frac{1}{\la}) < 1 ,$ then \eqref{4.4} implies that $ T_A\in \cL ( \ell ^p )$ whenever
$ G_\la : \ell ^p  \lra \ell ^p $ is continuous.

Let now $ \la \in \rho ( C_p), $ that is, $  | \la - \frac{p'}{2}| > \frac{p'}{2} .$ Then
$ \alpha := \mbox{Re}( \frac{1}{\la}) < \frac{1}{p'},$ because of  Remark \ref{4.1}, and hence,  $ (1 - \alpha ) p > 1 .$
Then the Proposition  on p.269 of \cite{CR1} yields that indeed $ G_\la \in \cL (  \ell^p ). $
As noted above, this
implies that $ T_{E_\la}  \in \cL ^r ( \ell^ p ).$
Combined with \eqref{3.4} and Lemma \ref{L3.3} it follows that $ ( C_p - \la I )^{-1} \in \cL^r ( \ell^p ),$
  that is, $ \la \in \rho_{\rm o } ( C_p).$ This completes the proof of \eqref{4.2}.
\end{proof}
Recall that $ \| C_\infty \| _{\rm op} = 1 $ and,  from \eqref{4.1} for $ p = \infty ,$ that
\begin{equation} \label{4.5} \textstyle
\si ( C_\infty ) = \{ \la \in \C : | \la - \frac{1}{2}| \leq \frac{1}{2} \}
\end{equation}
\begin{Prop} \label{P4.3} The order spectrum of the positive operator  $ C_\infty \in \cL ( \ell^\infty ) $ satisfies
$$ \si_{\rm o} ( C_\infty ) = \si (C_\infty ).
$$
\end{Prop}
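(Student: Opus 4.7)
The plan is to follow exactly the template of Proposition \ref{P4.2}, replacing the reference to \cite{CR1} concerning $G_\la$ acting in $\ell^p$ by a direct elementary estimate showing that $G_\la$ maps $\ell^\infty$ into itself whenever $\al := \operatorname{Re}(1/\la) < 1$. As always, by \eqref{1.2} it suffices to verify that $\rho(C_\infty) \subseteq \rho_{\rm o}(C_\infty)$, that is, for each $\la \in \rho(C_\infty)$ to show $(C_\infty - \la I)^{-1} \in \cL^r(\ell^\infty)$.

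First, I would fix $\la \in \rho(C_\infty)$. By \eqref{4.5} this means $|\la - \tfrac12| > \tfrac12$, so Remark \ref{R4.1} (applied with $b = 1$) yields $\al := \operatorname{Re}(1/\la) < 1$. In particular $\la \not\in \Si_0$, so the decomposition \eqref{3.4} is available, and Lemma \ref{L3.3} already supplies $T_{D_\la} \in \cL^r(\ell^\infty)$. The proof therefore reduces to showing $T_{E_\la} \in \cL^r(\ell^\infty)$.

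Next, since $\al < 1$, the estimate \eqref{4.3} applies and provides a constant $\beta(\la) > 0$ with $|e_{nm}(\la)| \leq \beta(\la) / (n^{1-\al} m^\al)$ for $1 \leq m \leq n$. Defining the lower triangular matrix $A$ by $a_{nm}(\la) := \beta(\la)/(n^{1-\al}m^\al)$ for $1 \leq m \leq n$ (and $0$ otherwise), condition \eqref{3.2} of Corollary \ref{C3.2} is satisfied with $B := E_\la$. By Corollary \ref{C3.2}, the desired regularity $T_{E_\la} \in \cL^r(\ell^\infty)$ will follow as soon as we verify $T_A(\ell^\infty) \subseteq \ell^\infty$, i.e., that the operator
$$
G_\la(x)_n \; = \; \frac{1}{n^{1-\al}} \sum_{m=1}^n \frac{x_m}{m^\al}, \qquad x \in \C^{\N},
$$
from \eqref{4.4} maps $\ell^\infty$ continuously into $\ell^\infty$. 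For $x \in \ell^\infty$ one has $|G_\la(x)_n| \leq \|x\|_\infty \cdot n^{\al-1} \sum_{m=1}^n m^{-\al}$. When $0 \leq \al < 1$ an integral comparison gives $\sum_{m=1}^n m^{-\al} \leq K_\al \, n^{1-\al}$, while for $\al < 0$ one simply has $\sum_{m=1}^n m^{-\al} \leq n \cdot n^{-\al} = n^{1-\al}$. In either case $\sup_n |G_\la(x)_n| \leq K \|x\|_\infty$ for a constant $K = K(\al)$, so $G_\la \in \cL(\ell^\infty)$.

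Once this is established, Corollary \ref{C3.2} yields $T_{E_\la} \in \cL^r(\ell^\infty)$, and combining with Lemma \ref{L3.3} and \eqref{3.4} gives $(C_\infty - \la I)^{-1} \in \cL^r(\ell^\infty)$. Thus $\la \in \rho_{\rm o}(C_\infty)$, completing the proof. The only non-routine point is the verification that $G_\la$ acts boundedly on $\ell^\infty$ for every $\al < 1$, which turns out to be an easy elementary sum estimate; the conceptual work has already been done in Sections \ref{S3} and \ref{S4}, and no deeper $\ell^\infty$-specific technique is needed.
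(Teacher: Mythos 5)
Your proposal is correct and follows essentially the same route as the paper: reduce to the regularity of $T_{E_\la}$ via \eqref{3.4} and Lemma \ref{L3.3}, dominate $E_\la$ by the matrix $A$ coming from \eqref{4.3}, and apply Corollary \ref{C3.2} once $T_A\in\cL(\ell^\infty)$ is known. The only difference is cosmetic: where the paper cites \cite{ABR} and \cite{T} for the uniform row-sum bound \eqref{4.6}, you verify the estimate $\sup_n n^{\al-1}\sum_{m=1}^n m^{-\al}<\infty$ for $\al<1$ directly, which is a valid elementary substitute.
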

\begin{proof} \  Again by \eqref{1.2} it suffices to prove that  $ \rho ( C_\infty ) \subseteq \rho_{\rm o} ( C_\infty ). $ \\

Fix $ \lambda \in \rho ( C_\infty ).$ According to \eqref{4.5}, for $ b = 1 $  the condition  in Remark \ref{R4.1} is  satisfied with
$ \alpha : = \mbox{Re} ( \frac{1}{\la}).$ Hence, the inequalities \eqref{4.3} are valid and so $ A := ( a_{nm} ( \la))^\infty _{n,m = 1} \geq 0 $
and $ B := E_\la $ can again be defined exactly  as in the proof of Proposition \ref{4.2}. Then \eqref{3.2} is satisfied with $ X := \ell^\infty.$ Arguing as in the
proof of Proposition \ref{P4.2}  (via Corollary \ref{C3.2}) it remains to verify that $ T_A : \ell ^\infty \lra \ell ^\infty $ is continuous,
where $ T _A $ is given by \eqref{4.4}. To this effect, since  $ (1 - \alpha ) > 0$ by Remark \ref{R4.1},  it follows  that
 \begin{equation} \label{4.6} \textstyle
\sup _{n \in \N} \sum^\infty _{ m = 1} | a_{nm} ( \lambda ) | = \beta ( \lambda ) \sup _{n \in \N}
\frac{1}{n^{1 - \alpha}}  \sum ^\infty_{ m = 1} \frac{1}{m^\alpha}  < \infty ;
 \end{equation}
 this has been verified on p.778 of \cite{ABR} (put $ w (n) = 1$  there for all $ n \in \N )$
 by considering each of the cases $ \alpha < 0 ,  \, \alpha = 0 $ and $ 0 < \alpha < 1 $ separately. But, condition
 \eqref{4.6} is known to imply that $ T_A \in \cL ( \ell^\infty ),$  \cite[Ex.2, p.220]{T}. The  proof that
 $ \lambda \in \rho_{\rm o} (C_\infty ) $ is thereby complete.
\end{proof}
To conclude this section we consider the Cesàro operator $ C ,$ as given by \eqref{1.4}, when it is restricted to $ c_0;$
denote this operator by $ C_ 0.$ It is shown in \cite[Theorem 3]{L1}, \cite{R}, that $ \| C_0 \| _{\rm op} = 1 $ and
\begin{equation} \label{4.7} \textstyle
\si ( C_0) = \{ \lambda \in \C : | \lambda  - \frac{1}{2} | \leq \frac{1}{2}  \} .
\end{equation}
\begin{Prop} \label{P4.4} The order spectrum of the positive operator  $ C_0 \in \cL (c_0 )  $ satisfies
$$
\si _{\rm o } ( C_0 ) = \si ( C_0 ) .
$$
\end{Prop}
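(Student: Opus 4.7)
The plan mirrors the approach of Propositions~\ref{P4.2} and \ref{P4.3}, with one additional ingredient needed to ensure the relevant auxiliary positive operator leaves $c_0$ invariant. By \eqref{1.2} it suffices to prove $\rho(C_0)\subseteq\rho_{\rm o}(C_0)$. Fix $\lambda\in\rho(C_0)$. From \eqref{4.7} and Remark~\ref{R4.1} (applied with $b=1$), the real number $\alpha:=\operatorname{Re}(1/\lambda)$ satisfies $\alpha<1$, so the estimates \eqref{4.3} are available. Via the decomposition \eqref{3.4} and Lemma~\ref{L3.3}, verifying $(C_0-\lambda I)^{-1}\in\cL^r(c_0)$ reduces to showing $T_{E_\lambda}\in\cL^r(c_0)$.

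To this end I would again apply Corollary~\ref{C3.2} with $B:=E_\lambda$ and with the same dominating matrix $A$ used in Proposition~\ref{P4.3}: the lower triangular matrix having non-negative entries $a_{nm}(\lambda)=\beta(\lambda)/(n^{1-\alpha}m^{\alpha})$ for $1\le m\le n$, and $0$ otherwise. Condition \eqref{3.2} then holds by \eqref{4.3}, and the application of Corollary~\ref{C3.2} comes down to checking that $T_A(c_0)\subseteq c_0$. Once this is known, Lemma~\ref{L3.1} (via the Closed Graph Theorem) automatically gives $T_A\in\cL(c_0)$, Corollary~\ref{C3.2} delivers $T_{E_\lambda}\in\cL^r(c_0)$, and combining with \eqref{3.4} and Lemma~\ref{L3.3} yields $(C_0-\lambda I)^{-1}\in\cL^r(c_0)$, i.e., $\lambda\in\rho_{\rm o}(C_0)$.

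The only genuine point, and the main obstacle relative to the $\ell^\infty$ case, is the invariance $T_A(c_0)\subseteq c_0$. I would obtain it by a density argument supported by the $\ell^\infty$-continuity of $T_A$. The estimate \eqref{4.6}, already established in the proof of Proposition~\ref{P4.3} by a case analysis on the sign of $\alpha$, gives $T_A\in\cL(\ell^\infty)$. On the other hand, a direct computation shows that for each standard unit vector $e_m$ one has $(T_Ae_m)_n=\beta(\lambda)/(m^{\alpha}n^{1-\alpha})$ for $n\ge m$ (and $0$ for $n<m$); since $1-\alpha>0$, this tends to $0$ as $n\to\infty$, so $T_Ae_m\in c_0$. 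By linearity $T_A$ sends the space of finitely supported sequences into $c_0$. Since this subspace is dense in $c_0$, $T_A$ is $\ell^\infty$-continuous, and $c_0$ is closed in $\ell^\infty$, it follows that $T_A(c_0)\subseteq c_0$, which completes the proof. I expect this density/closure step to be the only point at which the argument departs substantively from the one given for $\ell^\infty$.
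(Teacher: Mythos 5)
Your proof is correct and follows essentially the same route as the paper: the same dominating matrix $A$ with entries $\beta(\lambda)/(n^{1-\alpha}m^{\alpha})$, the same reduction via Corollary~\ref{C3.2}, Lemma~\ref{L3.3} and \eqref{3.4}, and the same two facts, namely the uniform row-sum bound \eqref{4.6} together with the vanishing of the columns (the paper's \eqref{4.9}). The only (harmless) difference is that where the paper cites the classical characterization of matrix operators on $c_0$ \cite[Theorem 4.51-C]{T}, you prove the needed implication directly by a density-and-closedness argument in $\ell^\infty$, which is a valid self-contained substitute.
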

\begin{proof} \  Since \eqref{4.7} shows that $ \si ( C_0 ) = \si ( C_\infty),$ the entire proof of Proposition \ref{P4.3}
can be easily adapted (now for $ X := c_0 $ and fixed  $ \lambda \in  \rho ( C_0 )),$  using the same notation, \textit{ up to the stage}\/  where
\eqref{4.6} is shown to be valid. In \textit{addition}\/ to the validity of \eqref{4.6}  it is also true
 that
\begin{equation} \label{4.9} \textstyle
\lim_{n \to \infty} a_{nm} ( \lambda ) = \frac{\beta ( \lambda)}{m^\alpha}  \lim_{n \to \infty} \frac{1}{n^{1 - \alpha}} = 0, \quad m \in \N ,
\end{equation}
because $ \alpha := \mbox{Re} ( \frac{1}{\lambda}) $ satisfies $ ( 1 - \alpha ) > 0 .$ The two conditions \eqref{4.6} and \eqref{4.9}
together are known to imply that $ T_A \in \cL ( c_0 ),$ \cite[Theorem 4.51-C]{T}. Again via Corollary \ref{C3.2} and Lemma \ref{L3.3}
we can conclude that $ T_{E_\lambda} \in \cL ^r ( c_0) $ and hence, also $ (C_0  - \lambda I )^{-1} $ is regular on $ c_0 .$
\end{proof}

\section{The discrete Cesàro spaces $ \cesp, 1 < p < \infty ,$  and  $ \ces (0)$ } \label{S5}

For $ 1 < p < \infty $ the discrete Cesàro spaces are defined  by
$$ \textstyle
\cesp  := \{ x \in \C^{\N} : \| x \|_{\cesp} := \left( \sum^\infty _{n = 1} ( \frac{1}{n} \sum^n _{k = 1} | x_k |) ^p \right) ^{1/p} < \infty \}  .
$$
In view of \eqref{1.4} we see that $ \| x \|_ {\cesp}  = \| C ( | x | )\|_{\ell^p} $ for $ x \in \cesp . $ It is known that each space
$ \cesp , 1 < p < \infty ,$  is a reflexive Banach lattice sequence space for the norm $ \| \cdot \| _{\cesp} $ and the coordinatewise order. The spaces
$ \cesp $ have been thoroughly treated in \cite{B}. According to Theorem \ref{5.1} of \cite{CR3} the restriction of $ C $
(see \eqref{1.4}) to $ \cesp ,$ denoted here by $ C_{(p)},$ is continuous with $ \|  C_{(p)} \| _{\rm op} =  p'$ and
\begin{equation} \label{5.1} \textstyle
 \si ( C_{(p)} ) = \left\{ \lambda \in \C : | \lambda - \frac{p'}{2} | \leq \frac{p'}{2} \right\}, \quad  1 < p <  \infty .
 \end{equation}
 \begin{Prop} \label{P5.1}
 For each $ 1 < p < \infty $ the order spectrum of the positive operator  $ C_{(p)}  \in \cL ( \cesp) $ satisfies
 \begin{equation} \label{5.2} \textstyle
  \si_ {\rm o} ( C_{(p)} ) =  \si ( C_{(p)}) .
  \end{equation}
\end{Prop}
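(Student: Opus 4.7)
The proof should proceed along the exact same template as Propositions \ref{P4.2}, \ref{P4.3}, and \ref{P4.4}. By the inclusion \eqref{1.2} it suffices to verify that $\rho(C_{(p)}) \subseteq \rho_{\rm o}(C_{(p)})$. Fix $\lambda \in \rho(C_{(p)})$; by \eqref{5.1} one has $|\lambda - p'/2| > p'/2$, which via Remark \ref{R4.1} applied with $b = p'$ translates into $\alpha := \mbox{Re}(1/\lambda) < 1/p' < 1$. In particular $\lambda \notin \Si_0$, so the decomposition \eqref{3.4} is available, and Lemma \ref{L3.3} already yields $T_{D_\lambda} \in \cL^r(\cesp)$. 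The problem is thus reduced to showing $T_{E_\lambda} \in \cL^r(\cesp)$.

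Since $\alpha < 1$, Lemma \ref{L3.5} provides the estimate \eqref{4.3}. Define the positive lower-triangular matrix $A = (a_{nm}(\lambda))_{n,m=1}^\infty$ by $a_{nm}(\lambda) := \beta(\lambda)/(n^{1-\alpha} m^\alpha)$ for $1 \leq m \leq n$ and zero otherwise, and set $B := E_\lambda$. Then \eqref{3.2} is satisfied. By Corollary \ref{C3.2}, the regularity of $T_{E_\lambda}$ in $\cesp$ will follow once the dominating operator $T_A = \beta(\lambda) G_\lambda$, with $G_\lambda$ defined as in \eqref{4.4}, is shown to map $\cesp$ continuously into itself.

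The continuity of $G_\lambda$ on $\cesp$ under the hypothesis $\alpha < 1/p'$ (equivalently, $(1-\alpha)p > 1$) is the only nontrivial point, and I expect it to be the main obstacle. My first plan is to appeal to the kernel estimates already carried out in \cite{CR3} en route to proving the spectrum formula \eqref{5.1}; those estimates establish boundedness of $(C_{(p)} - \lambda I)^{-1}$ on $\cesp$ for $\lambda$ outside the disc in \eqref{5.1} precisely by controlling operators of this weighted-averaging type. Alternatively, one can argue directly via the isometric identity $\|y\|_{\cesp} = \|C|y|\|_{\ell^p}$: after swapping the order of summation in $C(G_\lambda |x|)_n$, the inner sum $\sum_{k=m}^n k^{\alpha-1}$ can be controlled separately in the three regimes $\alpha < 0$, $\alpha = 0$, and $0 < \alpha < 1/p'$, in each case reducing the required norm estimate to a standard Hardy-type inequality on $\ell^p$ whose convergence is guaranteed by $(1-\alpha)p > 1$.

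Once $G_\lambda \in \cL(\cesp)$ is in hand, Corollary \ref{C3.2} gives $T_{E_\lambda} \in \cL^r(\cesp)$, and then \eqref{3.4} together with Lemma \ref{L3.3} yields $(C_{(p)} - \lambda I)^{-1} \in \cL^r(\cesp)$, that is, $\lambda \in \rho_{\rm o}(C_{(p)})$. Since $\lambda \in \rho(C_{(p)})$ was arbitrary, \eqref{5.2} follows.
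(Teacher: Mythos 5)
Your skeleton is the right one and matches the paper's opening moves: reduce via \eqref{1.2} to showing $\rho(C_{(p)})\subseteq\rho_{\rm o}(C_{(p)})$, split off the diagonal part with Lemma \ref{L3.3}, and reduce everything to exhibiting a nonnegative matrix that dominates $E_\lambda$ entrywise and whose operator is bounded on $\cesp$, so that Corollary \ref{C3.2} applies. The gap is at the decisive step. You dominate $E_\lambda$ by the kernel $\beta(\lambda)n^{\alpha-1}m^{-\alpha}$ and then need $G_\lambda\in\cL(\cesp)$ for $\alpha:=\mbox{Re}(1/\lambda)<1/p'$, but you do not prove this; you only name two possible routes. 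The first route does not deliver as stated: \cite{CR3} establishes boundedness of $(C_{(p)}-\lambda I)^{-1}$ on $\cesp$ not by controlling $G_\lambda$-type weighted averaging operators, but by the quite different entrywise dominations $|e_{nm}(\lambda)|\le \frac1n$ (when $\mbox{Re}(1/\lambda)\le 0$) and $|e_{nm}(\lambda)|\le e_{nm}(1/\alpha)$ (when $\lambda$ lies on the circle $\mbox{Re}(1/z)=\alpha$), so there is no off-the-shelf statement there about $G_\lambda$ on $\cesp$. The second route is not ``a standard Hardy-type inequality on $\ell^p$'': after using $\|y\|_{\cesp}=\|C(|y|)\|_{\ell^p}$ and swapping summation you face $\frac1n\sum_{m\le n}|x_m|m^{-\alpha}\sum_{j=m}^{n}j^{\alpha-1}$, and converting this into a bound in terms of $(C|x|)_n$ requires an Abel summation argument together with the $\ell^p$-boundedness of $G_\lambda$ itself, with $\alpha\le 0$ treated separately. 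The claim is plausibly true, but it is a genuine piece of analysis that your proposal leaves entirely open, and it is exactly the point where ``the same template as Propositions \ref{P4.2}--\ref{P4.4}'' stops being automatic.

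The paper sidesteps this by choosing dominating matrices whose $\cesp$-boundedness is already known or comes for free. It partitions $\rho(C_{(p)})$ into $\rho_1=\{\lambda\neq 0:\mbox{Re}(1/\lambda)\le 0\}$ and $\rho_2=\bigcup_{0<\alpha<1/p'}\Gamma_\alpha$ with $\Gamma_\alpha$ as in \eqref{5.5}. On $\rho_1$ the estimate \eqref{5.4}, $|e_{nm}(\lambda)|\le\frac1n$, lets one dominate $E_\lambda$ by the Ces\`aro matrix itself, and $C_{(p)}\in\cL(\cesp)$ is already known. On $\rho_2$ the estimate \eqref{5.6}, $|e_{nm}(\lambda)|\le e_{nm}(1/\alpha)$, dominates $E_\lambda$ by the positive matrix $E_{1/\alpha}$ taken at the real point $\frac1\alpha>p'$, which itself belongs to $\rho(C_{(p)})$; the decomposition \eqref{3.4} evaluated at $\lambda=\frac1\alpha$ then expresses $T_{E_{1/\alpha}}$ through the bounded operators $T_{D_{1/\alpha}}$ and $(C_{(p)}-\frac1\alpha I)^{-1}$, so $T_{E_{1/\alpha}}\in\cL(\cesp)$ with no new estimates, and Corollary \ref{C3.2} finishes the argument. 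To repair your version you must either carry out the $\cesp$-boundedness of $G_\lambda$ in detail or switch to these dominations.
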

\begin{proof} \  In view of \eqref{1.2} it suffices to verify that $ \rho ( C_{(p)}) \subseteq \rho_{\rm o} ( C_{(p)}).$\\

  We decompose the set
$ \rho  ( C_{(p)}) $ into two disjoint parts, namely the set
\begin{equation}  \label{5.3} \textstyle
 \rho_1 := \{ \la \in \C\!\ssm\!\{0 \} : \mbox{ Re} \left( \frac{1}{\la} \right) \leq 0 \} = \{ u \in \C\!\ssm\!\{ 0 \} : \mbox{  Re}(u) \leq 0 \}
 \end{equation}
 and  its complement $ \rho_2 := \rho ( C_{(p)} )\!\ssm\!\rho_1 .$\\

 First fix $ \la \in \rho_1 .$ Then $ \la \not\in \Si_0 $ and so we may consider $ E_\la = ( e_{nm} ( \la ))^\infty_{ n, m = 1}  $
 and $ D_\la = ( d_{nm} ( \la ))^\infty _{n,m = 1} $ as specified by \eqref{3.7} and \eqref{3.6}, respectively.  It is shown  on p.72 of \cite{CR3} that
 \begin{equation} \label{5.4} \textstyle
  | e_{nm} ( \la ) | \leq \frac{1}{n}, \quad 1 \leq m < n, \quad n \in \N .
  \end{equation}
  \textit{Warning}: \ In \cite{CR3} the set $ \N = \{ 0, 1, 2, \ldots \} $ is used rather than $ \N = \{ 1, 2, 3, \ldots \} $
  which is used here and so the
  inequalities from \cite{CR3} are   slightly different  when they are stated here. Back to our proof, it  is clear from \eqref{1.4} that the matrix
  $ A = ( c_{nm}) ^\infty_{ n, m = 1} $ for the Cesàro operator  $ C $ is lower triangular with its $n$-th row,
  for each $ n \in \N , $  given by $ c_{nm} := \frac{1}{n} $ for
  $ 1 \leq m \leq n $ and   $ c_{nm} := 0$ for $ m > n.$  Setting $ B := E_\la $ it is clear from \eqref{5.4} that \eqref{3.2}  is satisfied for the pair
  $ A, B $ in the space $ X := \cesp .$ Since $ C_{(p)}  = T_A : \cesp \lra \cesp $ is continuous, it follows from Corollary \ref{C3.2}
  that $ T_{E_\la} \in \cL^r ( \cesp) $ and hence, via Lemma \ref{L3.3} and \eqref{3.4}, that also $  ( C_{(p)} - \la I )^{-1} \in
  \cL^r ( \cesp) . $ \\

  Consider now the set $ \rho_2 .$ From \eqref{5.1} it is routine to establish that a non-zero point $ z \in \C$ belongs to $  \si ( C_{(p)} ) $ if and only if $ \mbox{Re} ( \frac{1}{z} ) \geq
  \frac{1}{p'}.$  From the case of equality in Remark \ref{R4.1},  it follows that
   $ \rho_2 = \bigcup_{ 0 < \alpha < 1 /p'}  \Ga _\alpha , $ where
   \begin{equation} \label{5.5} \textstyle
    \Ga_{\alpha}  := \left\{ z \in \C\!\ssm\!\{ 0 \} : \mbox{Re} \left( \frac{1}{z}\right) = \alpha \right\} =
    \left\{ z \in \C\!\ssm\!\{0\} : \left| z - \frac{1}{2 \alpha} \right| = \frac{1}{2 \alpha} \right\} .
    \end{equation}
    Fix a point  $ \la \in \rho_2 .$ Then there exists a unique number  $ \alpha \in ( 0, \frac{1}{p'} )$ such that
    $ \la \in \Ga_\alpha ,$ namely $ \alpha : = \mbox{Re}  ( \frac{1}{\la} ).$ In the notation of \eqref{3.7}
    it is shown on p.72 of \cite{CR3} that
    \begin{equation} \label{5.6} \textstyle
    | e_{nm}  ( \la ) | \leq e _{nm} \left( \frac{1}{\alpha}  \right) , \quad n, m \in \N .
    \end{equation}
    Note that $  e_{nm} ( \frac{1}{\alpha} ) \geq 0 $ for all $ n, m \in \N $ follows from  \eqref{3.7}  as $ 0 < \alpha < \frac{1}{p'} $
    implies that $ 1 - \frac{1}{k ( 1 / \alpha)}  = ( 1 - \frac{\alpha}{k}  ) > 0 $ for $ m \leq k \leq n .$
    Setting $ \widetilde{A} := E_{ 1 / \alpha} $ and $ \widetilde{B} := E_\lambda $ it is clear from \eqref{5.6} that \eqref{3.2} is
    satisfied for the pair $ \widetilde{A} , \widetilde{B}$ in place of $ A, B.$  Moreover, $ \frac{1}{\alpha} > p' $ implies that $ \frac{1}{\alpha}  \in \rho ( C_{(p)} ),$
    that is, $ ( C_{(p)} - \frac{1}{\alpha} I ) ^{-1} \in \cL ( \cesp ).$ Since $ T_{D _{1/\alpha}} \in \cL ( \cesp) $ by Lemma \ref{L3.3} (with
    $ \frac{1}{\alpha} $ in place of $ \lambda ),$ the identity $ T _{ E_{ 1 / \alpha}} = \alpha ^2 ( T_{D_{ 1 / \alpha}} -  ( C_{ (p)} - \frac{1}{\alpha} I ) ^{-1} ) $
    shows that  $ T_{\widetilde{A}} \in \cL ( \cesp).$ Hence, Corollary \ref{C3.2} can be applied to conclude that  $ T_{\widetilde{B}} = T _{ E_\la}  \in \cL ^r (\cesp) .$
    It then follows from \eqref{3.4} and Lemma 3.3 that $ ( C_{(p)} - \la I )^{-1} \in \cL ^r (\cesp). $
   \end{proof}
   The remaining space to  consider  is $ \ces (0)  := \{ x \in \C^{\N}  : C ( | x | ) \in c _0 \} $ equipped with the norm
   $$  \textstyle
   \| x \| _{\ces(0)}  := \| C (| x | ) \| _{c_0} = \sup _{n \in \N}  \frac{1}{n} \sum^n _{ k = 1} | x _k | , \quad
   x \in \ces(0).
   $$
   It is a Banach lattice  sequence space for the norm $ \| \cdot \| _{ \ces (0)} $ and the coordinatewise order. According to
   \cite[Theorem 6.4]{CR3}, the restriction of $ C$ (see (\eqref{1.4}) to  $ \ces(0),$ denoted here  by $ C_{(0)}, $
   is continuous with $ \| C _{(0)} \|_{\rm op} = 1 $ and
   \begin{equation}  \label{5.7} \textstyle
    \si (C _{(0)} ) = \{ \la \in \C : | \la - \frac{1}{2} | \leq \frac{1}{2} \} .
  \end{equation}
  \begin{Prop} \label{P5.2} \  The order spectrum of the positive operator  $ C_{(0)}  \in \cL ( \ces(0))$ satisfies
  $$
  \si _{\rm o} ( C_{(0)}) = \si ( C_{(0)}) .
  $$
  \end{Prop}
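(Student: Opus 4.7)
The plan is to mimic the proof of Proposition \ref{P5.1} essentially verbatim, replacing $\cesp$ by $\ces(0)$ throughout and replacing the bound $1/p'$ by $1$ (which is the natural limit case since $\|C_{(0)}\|_{\rm op}=1$ and \eqref{5.7} shows that $\si(C_{(0)})$ is the same disk as in the case $p=\infty$). By \eqref{1.2} it suffices to verify $\rho(C_{(0)}) \subseteq \rho_{\rm o}(C_{(0)})$. Decompose $\rho(C_{(0)}) = \rho_1 \cup \rho_2$ with
\eqs{\textstyle
 \rho_1 := \{ \la \in \C\!\ssm\!\{0\} : \mbox{Re}(\tfrac{1}{\la}) \leq 0\}, \qquad
 \rho_2 := \rho(C_{(0)})\ssm\rho_1.
}
As in the proof of Proposition \ref{P5.1}, Remark \ref{R4.1} with $b=1$ applied to \eqref{5.7} identifies $\rho_2$ as $\bigcup_{0 < \alpha < 1} \Ga_\alpha$, where $\Ga_\alpha$ is given by \eqref{5.5}.

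For $\la \in \rho_1$ I would invoke the inequality \eqref{5.4}, namely $|e_{nm}(\la)| \leq \frac{1}{n}$ for $1 \leq m < n$, which is proved on p.72 of \cite{CR3} under the hypothesis $\mbox{Re}(1/\la) \leq 0$ and is purely a statement about matrix entries, independent of the ambient sequence space. Let $A$ be the lower triangular Cesàro matrix with $c_{nm} = 1/n$ for $1 \leq m \leq n$ and set $B := E_\la$. Then \eqref{3.2} holds for the pair $A,B$ in $X := \ces(0)$, and since $C_{(0)} = T_A \in \cL(\ces(0))$ by \cite[Theorem 6.4]{CR3}, Corollary \ref{C3.2} yields $T_{E_\la} \in \cL^r(\ces(0))$. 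Together with Lemma \ref{L3.3} and the decomposition \eqref{3.4}, this gives $(C_{(0)} - \la I)^{-1} \in \cL^r(\ces(0))$.

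For $\la \in \rho_2$ the point $\alpha := \mbox{Re}(1/\la)$ lies in $(0,1)$, so $1/\alpha > 1$ is outside the closed disk in \eqref{5.7} and hence $1/\alpha \in \rho(C_{(0)})$. The inequality \eqref{5.6}, i.e., $|e_{nm}(\la)| \leq e_{nm}(1/\alpha)$, is again proved on p.72 of \cite{CR3} and depends only on the matrix entries. Setting $\widetilde A := E_{1/\alpha}$ and $\widetilde B := E_\la$, condition \eqref{3.2} is satisfied. The identity
\eqs{\textstyle
 T_{E_{1/\alpha}} = \alpha^2 \bigl( T_{D_{1/\alpha}} - (C_{(0)} - \tfrac{1}{\alpha} I)^{-1} \bigr)
}
(coming from \eqref{3.4} with $\la$ replaced by $1/\alpha$) shows, via Lemma \ref{L3.3}, that $T_{\widetilde A} \in \cL(\ces(0))$. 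Corollary \ref{C3.2} then gives $T_{\widetilde B} = T_{E_\la} \in \cL^r(\ces(0))$, and a final appeal to \eqref{3.4} and Lemma \ref{L3.3} yields $(C_{(0)} - \la I)^{-1} \in \cL^r(\ces(0))$.

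There is no serious obstacle: the two ingredients I am relying on from \cite{CR3}, namely \eqref{5.4} and \eqref{5.6}, are statements about the matrix $E_\la$ alone and do not involve the target space $X$, so they transfer without modification from the setting of Proposition \ref{P5.1} to $\ces(0)$. The only point that requires genuine input specific to $\ces(0)$ is the continuity $C_{(0)} \in \cL(\ces(0))$ used in the $\rho_1$-argument, which is already available from \cite[Theorem 6.4]{CR3}; everything else follows from the general Banach lattice sequence space machinery of Section \ref{S3}.
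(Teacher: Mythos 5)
Your proposal is correct and follows the paper's argument essentially verbatim: the same decomposition $\rho(C_{(0)})=\rho_1\cup\rho_2$, the estimate \eqref{5.4} plus Corollary \ref{C3.2} on $\rho_1$, and the comparison \eqref{5.6} with $E_{1/\alpha}$ on $\rho_2$. The only (immaterial) difference is that for \eqref{5.6} in the full range $\alpha\in(0,1)$ the paper argues via the pointwise estimate at the bottom of p.~396 of \cite{CR2} rather than citing \cite{CR3} directly; since the inequality is a pure statement about the matrix entries, your reasoning is sound either way.
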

  \begin{proof} As usual it suffices to show that $ \rho ( C_{(0)} ) \subseteq \rho_{\rm o} ( C_{(0)} ).$ \\

   Let the set
  $ \rho _1 $ be as in \eqref{5.3}. For each $ \alpha \in (0,1)$ let $ \Ga_\alpha $ be given by  \eqref{5.5}. Then
  \eqref{5.7} ensures that we have the disjoint partition $ \rho ( C_{ (0)} ) = \rho_1 \cup \rho_2  $
  with $ \rho_2 := \bigcup _{ 0 < \alpha < 1 } \Ga_\alpha . $

  For any given point  $ \la \in \rho_1 $ the estimates \eqref{5.4} are again valid (see \cite[p.72]{CR3}) and so the argument in the proof
  of Proposition \ref{P5.1} can be easily adapted ( now for $ X := \ces (0) )$ to again show that  $ ( C_{(0)} - \lambda I)^{-1}
  \in \cL ^r ( \ces(0) ).$ \\

  Fix now $ \la \in \rho_2.$ Then there exists a unique $ \alpha \in (0,1)$ such that
  $ \la \in \Ga _\alpha ,$ namely $ \alpha := \mbox{Re} ( \frac{1}{\la}).$ Then $ \mbox{Re} (1 - \frac{1}{k \la} ) =  ( 1 - \frac{\alpha}{k} ) \geq 0 $
  for $ k \in \N .$ Arguing as at the bottom of p.396 in \cite{CR2}, now with $ x \in \ces(0)$ in place of  $ a \in \ces (2) $
  there, it follows that the $1\!$-st coordinate of $ E_\la ( x)$ is 0 and, for $ n \geq 2 ,$ that the $n$-th coordinate of $ E_\la (x)$ satisfies
  $$
  | ( E_\la ( x)) _n | \leq  ( E _{ 1 / \alpha} ( | x | ))_n , \quad x \in \ces (0).
  $$
  Substituting $ x := ( \delta _{rj} )^\infty _{ j = 1} $ into the previous estimates, for each $ r \in \N, $
  yields \eqref{5.6}. Since $ 0 < \alpha < 1 $ implies that $ \frac{1}{\alpha} \in \rho ( C_{(0)} ),$ the argument
  can be completed along the lines given in the proof of Proposition \ref{P5.1} to conclude that $ ( C_{(0)} - \la I )^{-1}
  \in \cL ^r ( \ces (0)).$ We  again warn the reader that $ \N = \{ 0, 1, 2, \ldots  \} $ is used in \cite{CR2}.
  \end{proof}

  \textbf{Acknowledgement.} \ The research of the first author (J. Bonet) was partially supported by the projects  MTM2016-76647-P and
  GV Prometeo 2017/102 (Spain).

  \bigskip

\bibliographystyle{plain}

  \end{document}